\newtheorem{theorem}{Theorem}[section]
\newtheorem{lemma}[theorem]{Lemma}
\theoremstyle{definition}
\theoremstyle{remark}
\numberwithin{equation}{section}
\begin{document}

\title{Weighted Sonine conditions and application}

\author{Xiangcheng Zheng}
\address{School of Mathematics, Shandong University, Jinan 250100, China}
\email{xzheng@sdu.edu.cn}
\thanks{This work was partially supported by the National Natural Science Foundation of China (No.12301555), the National Key R\&D Program of China (No. 2023YFA1008903), and the Taishan Scholars Program of Shandong Province (No. tsqn202306083).}

\author{Shangqin Zhu}
\address{School of Mathematics, Shandong University, Jinan 250100, China}
\email{shangqinzhu@163.com}

\author{Yiqun Li$^*$}
\address{Department of Mathematics, University of South Carolina, Columbia, SC 29208, USA}
\email{YiqunLi24@outlook.com}

\subjclass[2020]{45D05; 45H05; 26A33}

\date{May 29, 2024.}

\dedicatory{(Communicated by Editor)}

\keywords{Sonine condition, weighted Sonine condition, integral equation, nonlocal, fractional.}

\begin{abstract}
The Sonine kernel described by the classical Sonine condition of convolution form is an important class of kernels used in integral equations and nonlocal differential equations. This work extends this idea to introduce weighted Sonine conditions where the non-convolutional weight functions accommodate the inhomogeneity in practical applications. We characterize tight relations between classical Sonine condition and its weighted versions, which indicates that the non-degenerate weight functions may not introduce significant changes on the set of Sonine kernels. To demonstrate the application of weighted Sonine conditions, we employ them to derive equivalent but more feasible formulations of weighted integral equations and nonlocal differential equations to prove their well-posedness, and discuss possible application to corresponding partial differential equation models.
\end{abstract}

\maketitle


\section{Introduction}\label{Sect:Intro}
The Sonine kernel, which is initially  considered in \cite{Son}, represents a particular class of kernels that have been applied in both integral equations \cite{Car,Sam} and nonlocal differential equations \cite{Luc,LucFCAA}. For $b>0$, a kernel $k(t)\in L^1(0,b)$ is called a Sonine kernel if there exists another kernel $K(t)\in L^1(0,b)$ such that the  classical Sonine condition (denoted by CSC)
\begin{equation}\label{mh1}
\int_0^tK(t-s)k(s)ds=1
\end{equation}
holds for $t\in [0,b]$, where $K(t)$  is called a dual or associate kernel of $k(t)$ and  is  also a Sonine kernel. The Sonine kernel has favorable properties, e.g., the solutions to integral or nonlocal differential equations of convolution form with Sonine kernels could be analytically expressed with the assistance of the associate kernel, and there exist extensive investigations on the CSC \cite{Car,hanyga,Luc,LucFCAA,Luc2}.

However, as the convolution has the translation invariant feature, it is difficult to model the pointwise inhomogeneity. Instead, an additional non-convolutional weight function is usually involved to characterize such inhomogeneity. For instance, the weighted weakly-singular VIE with a power function convolution kernel is considered in many literature, see e.g. the books \cite{Bru,Gor} and the references therein. If we intend to consider the weighted problems with more general kernels, it is natural to propose weighted Sonine conditions.

Motivated by above discussions, we introduce weighted Sonine conditions (WSCs) with non-degenerate weight functions $w(s,t)$ (i.e. functions satisfying the condition (i) below), which contain two different types denoted by the WSC1 \eqref{mh2} and WSC2 \eqref{WSC2}. We first prove the following relations between the CSC and WSCs:
\begin{itemize}
\item[(A)] If $k$ satisfies WSC1, then its associated kernel $K$ satisfies CSC.

\item[(B)]  If $k$ satisfies WSC2, then $k$ satisfies CSC.

\item[(C)]  Converse to {(B)}, if a locally integrable completely monotone (LICM) $k$ satisfies CSC, then $k$ satisfies WSC2 with the same associated kernel.
\end{itemize}
From these relations we could observe that the CSC and WSCs have tight relations. In particular, (B) and (C) indicate that if $k$ satisfies WSC2, then $k$ satisfies CSC, and vice versa under an additional constraint, that is, the LICM condition. Nevertheless, the LICM condition is a widely used condition in studying the CSC \cite{hanyga} and covers many important Sonine kernels. Thus, these results suggest that the non-degenerate weight functions may not introduce significant changes on the set of Sonine kernels.

As a byproduct, we employ the WSCs to convert the weighted VIEs of the first kind and the nonlocal differential equations to more feasible formulations to perform mathematical analysis, which demonstrates the application of the derived WSCs. In particular, we establish the following well-posedness results:
\begin{itemize}
\item[(D)] The weighted first-kind VIE \eqref{nceq}  admits a unique $L^1$ solution.
\item[(E)] The nonlocal  differential equation \eqref{mh13}  admits a unique $L^1$ solution.
\end{itemize}

The rest of the paper is organized as follows: In \S \ref{wncsc}, we introduce two types of WSCs and incorporate the WSC1 to reformulate the first-kind VIE \eqref{mh23} into a more feasible form to carry out analysis. In particular, the statement (A) is proved as an application of the well-posedness of \eqref{mh23}. In \S \ref{IntDiff},  we employ the WSC1 to analyze the weighted first-kind VIE \eqref{nceq}  and the nonlocal differential equation \eqref{mh13}, which gives the statements (D) and (E). The tight connections between the CSC (\ref{mh1}) and  the WSC2 (\ref{WSC2}) as stated in (B) and (C) are proved in \S\ref{equivalence}. We finally address concluding remarks in the last section to show possible extensions for corresponding weighted nonlocal partial differential equations such as the weighted variable-exponent subdiffusion model.

Throughout this work,  we use $Q$ to denote a generic positive constant that may assume different values at different occurrences, and for a function $q(x_1,x_2)$ of two variables, we denote $q_i=\partial_{x_i}q$ for $i=1,2$.

\section{Weighted Sonine conditions}\label{wncsc}
Let the weight $w(s,t)$ satisfy
\begin{itemize}

\item[(i)] $w(t,t)\neq 0$ with $\mu_*\leq |w(t,t)|\leq \mu^*$ for $t\in [0,b]$ for some $0<\mu_*\leq \mu^*$;

\item[(ii)] $|w_2(s,t)|\leq \bar w(t-s)$ for $0\leq s\leq t\leq b$ for some $\bar w\in L^1(0,b)$.

\end{itemize}
Note that under these two conditions, $w$ is bounded as follows
$$|w(s,t)|=\Big|w(s,s)+\int_s^tw_2(s,\theta)d\theta\Big|\leq \mu^*+\|\bar w\|_{L^1(0,b)}. $$
Furthermore, $|w_2(s,t)|\leq Q$ or $|w_2(s,t)|\leq Q(t-s)^{-\sigma}$ for some $0<\sigma<1$ satisfies the condition (ii), which covers a wide class of functions.

Based on the conditions on $w$, we introduce two WSCs, namely WSC1 and WSC2. The WSC1 reads as follows:
for $k\in L^1(0,b)$, there exists a $K\in L^1(0,b)$ such that
\begin{equation}\label{mh2}
 \int_0^tw(s,z+s)K(t-z)k(z)dz=g(s,t),~~\forall 0\leq s+t\leq b
\end{equation}
for some function $g(s,t)$ satisfying
\begin{itemize}

\item[(a)] $g(s,0)\neq 0$ with $ \nu_*\leq |g(s,0)|\leq \nu^*$ for $0\leq s\leq b$ for some $0<\nu_*\leq \nu^*$;

\item[(b)]  $|g_2(s,t)|\leq  \bar g(t)$ for $0\leq s+t\leq b$ for some $\bar g\in L^1(0,b)$.

\end{itemize}
Under the relation (\ref{mh2}), $k(t)$ is called the weighted Sonine kernel  with   the associated kernel $K(t)$. When $w\equiv 1$, the left-hand side integral of (\ref{mh2}) is indeed a function of $t$ such that $g$ should simply be $g(t)$ on the right-hand side. In this case, the condition (\ref{mh2}) reduces to
\begin{equation*}
 \int_0^{t}K(t-z)k(z)dz=g(t),~~0\leq t\leq b.
\end{equation*}
If further $g(t)\equiv 1$, this condition degenerates to the CSC (\ref{mh1}).

We interchange the order of $K$ and $k$ in WSC1 \eqref{mh2} to introduce the WSC2 as follows: for $k\in L^1(0,b)$, there exists a $K\in L^1(0,b)$ such that
\begin{equation}\label{WSC2}
 \int_0^tw(s,z+s)k(t-z)K(z)dz=G(s,t),~~\forall 0\leq s+t\leq b
\end{equation}
for some function $G(s,t)$ satisfying conditions (a) and  (b), where we still denote its associated kernel   by $K$ for simplicity.
We note that the associated kernels in \eqref{mh2} and \eqref{WSC2} may refer to different functions.
\subsection{An example}
A typical example of the weighted Sonine kernel is the following variable-exponent Abel kernel
\begin{equation}\label{kt}
k(t)=t^{-\alpha(t)} \text{ for some }0<\alpha(t)<1\text{ on }[0,b],
\end{equation}
where the variable exponent $\alpha(t)$ describes, e.g., the variation of the memory and hereditary properties \cite{Lag,LiaSty,SunChaZha,ZenZha}. To simplify the derivations, we assume that
$$\alpha(t)\text{ is differentiable with }|\alpha'(t)|\leq L. $$
\begin{theorem}\label{wsc}
The $k(t)$ defined by (\ref{kt}) satisfies the WSC1 \eqref{mh2} with the associated kernel
\begin{equation*}
K(t)=\frac{t^{\alpha(0)-1}}{\kappa} \text{ with }\kappa=\Gamma(\alpha(0))\Gamma(1-\alpha(0)).
\end{equation*}
Here $\Gamma(\cdot)$ denotes the Gamma function.
\end{theorem}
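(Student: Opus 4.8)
The plan is to prove the claim by exhibiting an explicit non-degenerate weight $w(s,t)$ together with a right-hand side $g(s,t)$ for which the defining identity \eqref{mh2} holds with the stated associate $K(t)=t^{\alpha(0)-1}/\kappa$, and then checking that $w$ meets conditions (i)--(ii) and that $g$ meets (a)--(b). The guiding idea is that the weight should absorb precisely the discrepancy between the variable exponent $\alpha(z)$ carried by $k(z)=z^{-\alpha(z)}$ and the frozen exponent $\alpha(0)$ carried by $K$. Concretely, I would take
$$w(s,t)=(t-s)^{\alpha(t-s)-\alpha(0)},\qquad w(t,t):=1,$$
so that inside the integral of \eqref{mh2} one has $w(s,z+s)=z^{\alpha(z)-\alpha(0)}$, which cancels the $z$-dependent part of $k(z)$ and collapses the integrand to the constant-exponent form $\tfrac1\kappa (t-z)^{\alpha(0)-1}z^{-\alpha(0)}$.

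Next I would evaluate the reduced integral. After cancellation the left-hand side of \eqref{mh2} equals $\tfrac1\kappa\int_0^t (t-z)^{\alpha(0)-1}z^{-\alpha(0)}\,dz$, and the substitution $z=t\xi$ removes all $t$-dependence, leaving the Beta integral $\tfrac1\kappa\,B(\alpha(0),1-\alpha(0))=\tfrac1\kappa\,\Gamma(\alpha(0))\Gamma(1-\alpha(0))=1$. Hence $g(s,t)\equiv 1$, which trivially satisfies (a) with $\nu_*=\nu^*=1$ and (b) with $\bar g\equiv 0$. This outcome is also consistent with statement (A): $K(t)=t^{\alpha(0)-1}/\kappa$ is exactly the classical Sonine associate of the frozen kernel $t^{-\alpha(0)}$ and therefore satisfies the CSC \eqref{mh1}. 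Condition (i) is then settled by showing $w(t,t)=\lim_{u\to0^+}u^{\alpha(u)-\alpha(0)}=1$; writing $u^{\alpha(u)-\alpha(0)}=\exp\bigl((\alpha(u)-\alpha(0))\ln u\bigr)$ and using $|\alpha(u)-\alpha(0)|\le Lu$ with $u|\ln u|\to0$ gives the limit, so $\mu_*=\mu^*=1$ works.

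The main obstacle, and the step I expect to require the most care, is condition (ii). Differentiating in the second argument (with $u=t-s$, so that $w_2=\partial_u[u^{\alpha(u)-\alpha(0)}]$) yields
$$w_2(s,t)=u^{\alpha(u)-\alpha(0)}\Big(\alpha'(u)\ln u+\frac{\alpha(u)-\alpha(0)}{u}\Big).$$
The prefactor $u^{\alpha(u)-\alpha(0)}$ is bounded by the argument used for (i); the term $\alpha'(u)\ln u$ is controlled by $L|\ln u|$ since $|\alpha'|\le L$; and the apparently singular term $(\alpha(u)-\alpha(0))/u$ is bounded by $L$ through the mean value theorem. Combining these gives $|w_2(s,t)|\le Q\bigl(1+|\ln(t-s)|\bigr)=:\bar w(t-s)$, and because $\ln$ is integrable near the origin we have $\bar w\in L^1(0,b)$, which is (ii). The delicate point is thus the interplay of the two competing singularities as $t-s\to0^+$ (the logarithm arising from differentiating the exponent and the $1/u$ arising from differentiating the power), both of which are tamed by the Lipschitz bound on $\alpha$ and the integrability of the logarithm.
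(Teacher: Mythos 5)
There is a genuine gap, and it is not in your computations (which are correct) but in the reading of the statement: you have proven the wrong quantifier on $w$. In the paper, WSC1 is defined relative to a \emph{given} weight: the section opens with ``Let the weight $w(s,t)$ satisfy (i), (ii)'' and only then introduces \eqref{mh2}, so the theorem asserts that $k(t)=t^{-\alpha(t)}$ satisfies WSC1 with the stated $K$ \emph{for every} weight $w$ satisfying (i)--(ii). Accordingly, the paper's proof keeps $w$ arbitrary: it defines $g(s,t)$ to be the left-hand side of \eqref{mh2}, computes $g(s,0)=w(s,s)$ (so condition (a) follows from property (i) of $w$), and then bounds $g_2(s,t)$ using property (ii) together with the estimate $(tz)^{\alpha(0)-\alpha(tz)}\le e^{LQ}$ and the logarithmic bounds coming from differentiating the variable exponent. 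Your proposal instead \emph{constructs} one particular weight, $w(s,t)=(t-s)^{\alpha(t-s)-\alpha(0)}$, engineered so that the integrand collapses to the frozen-exponent Beta integral and $g\equiv 1$. This proves only the statement ``there exists a weight for which $k$ satisfies WSC1,'' which is strictly weaker and, moreover, defeats the purpose of the theorem: in the applications (the weighted VIE \eqref{nceq} and the nonlocal equation \eqref{mh13}), the weight $w$ is prescribed by the model as given data, so one needs WSC1 to hold for that arbitrary admissible $w$, not for a cherry-picked one that exactly cancels the kernel's variable exponent.

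Within your own framework the details are fine --- the cancellation $w(s,z+s)k(z)=z^{-\alpha(0)}$, the evaluation $\frac{1}{\kappa}B(\alpha(0),1-\alpha(0))=1$, and the verification that your $w$ satisfies (i) and (ii) via the bounds $|\alpha'(u)\ln u|\le L|\ln u|$ and $|(\alpha(u)-\alpha(0))/u|\le L$ are all correct, and they essentially mirror the technical estimates the paper uses. To repair the proof you should discard the explicit choice of $w$, define $g(s,t):=\frac{1}{\kappa}\int_0^1 w(s,tz+s)(tz)^{\alpha(0)-\alpha(tz)}(1-z)^{\alpha(0)-1}z^{-\alpha(0)}\,dz$ for a generic $w$ satisfying (i)--(ii), and run precisely the estimates you already have (bounding $(tz)^{\alpha(0)-\alpha(tz)}$, its $t$-derivative, and using (ii) to control the term involving $w_2$) to verify (a) and (b); this is exactly the paper's argument.
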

\begin{proof}
It is clear that $K(t)\in L^1(0,b)$. To verify the conditions (a) and (b), direct calculation by the WSC1 \eqref{mh2} yields
\begin{align}
&\int_0^tw(s,z+s)K(t-z)k(z)dz\nonumber\\
&\quad=\frac{1}{\kappa}\int_0^tw(s,z+s)(t-z)^{\alpha(0)-1}z^{-\alpha(z)}dz\nonumber\\
&\quad=\frac{1}{\kappa}\int_0^1w(s,tz+s)(t-tz)^{\alpha(0)-1}(tz)^{-\alpha(tz)}tdz\label{mh7}\\
&\quad=\frac{1}{\kappa}\int_0^1w(s,tz+s)(tz)^{\alpha(0)-\alpha(tz)}(1-z)^{\alpha(0)-1}z^{-\alpha(0)}dz=:g(s,t).\nonumber
 \end{align}
We apply
$$\lim_{x\rightarrow 0^+}x^{\alpha(0)-\alpha(x)}=\lim_{x\rightarrow 0^+}e^{(\alpha(0)-\alpha(x))\ln x}=e^0=1 $$
and take $t=0$ in the right-hand side of (\ref{mh7}) to obtain
$$g(s,0)=\frac{w(s,s)}{\kappa}\int_0^1(1-z)^{\alpha(0)-1}z^{-\alpha(0)}dz=w(s,s),$$
 which, together with the property (i), implies the condition (a).

 To prove the condition (b), we differentiate $g$ in (\ref{mh7}) with respect to $t$ to obtain
 \begin{equation}\label{dg}
 \begin{array}{l}
 \displaystyle  g_2(s,t)=\frac{1}{\kappa}\int_0^1w_2(s,tz+s)z(tz)^{\alpha(0)-\alpha(tz)}(1-z)^{\alpha(0)-1}z^{-\alpha(0)}dz\\[0.1in]
 \displaystyle   + \frac{1}{\kappa}\int_0^1w(s,tz+s)\frac{d}{dt}\big[(tz)^{\alpha(0)-\alpha(tz)}\big](1-z)^{\alpha(0)-1}z^{-\alpha(0)}dz=:A_1+A_2.
 \end{array}
 \end{equation}
  By assumptions on $\alpha(t)$ and $x|\ln x|\leq Q$ for $x\in [0,b]$, we bound
\begin{equation}\label{xz2}
(tz)^{\alpha(0)-\alpha(tz)}=e^{(\alpha(0)-\alpha(tz))\ln(tz)}\leq e^{Ltz|\ln(tz)|}\leq e^{LQ}.
\end{equation}
 We use this and the property (ii) to bound $A_1$ in \eqref{dg} by
 $$B_1(t):=Q\int_0^1\bar w(tz)z(1-z)^{\alpha(0)-1}z^{-\alpha(0)}dz. $$
 To bound $A_2$ in \eqref{dg}, we apply properties (i) and (ii) to bound $w$ as
 \begin{align*}
 |w(s,tz+s)|&=\Big|w(s,s)+\int_s^{tz+s}  w_2(s,\theta) d\theta\Big|\\
 &\leq \mu^*+ \int_s^{tz+s}  \bar w(\theta-s) d\theta\leq \mu^*+\|\bar w\|_{L^1(0,b)}.
 \end{align*}
We then apply (\ref{xz2}) and the property of $\alpha(t)$ to bound
 \begin{align*}
 \Big|\frac{d}{dt}(tz)^{\alpha(0)-\alpha(tz)}\Big|&=\Big|(tz)^{\alpha(0)-\alpha(tz)}z\Big(-\alpha'(tz)\ln(tz)+\frac{\alpha(0)-\alpha(tz)}{tz}\Big) \Big|\\
 &\leq e^{LQ}z\big(L|\ln(tz)|+L\big)\leq Qz\big(|\ln(tz)|+1\big).
 \end{align*}
 We invoke the above two estimates and (\ref{xz2}) to bound $A_2$ in \eqref{dg}  as
 $$B_2(t):=Q\int_0^1 z\big(|\ln(tz)|+1\big) (1-z)^{\alpha(0)-1}z^{-\alpha(0)}dz. $$
 Consequently, the $g_2(s,t)$ could be bounded as
 $$|g_2(s,t)|\leq \bar g(t):=B_1(t)+B_2(t). $$

To show that $\bar g\in L^1(0,b)$,  direct calculation yields
\begin{align*}
\int_0^b|\bar g(t)|dt&=Q\int_0^1\int_0^b\big[\bar w(tz)z+z(|\ln(tz)|+1) \big]  dt(1-z)^{\alpha(0)-1}z^{-\alpha(0)}dz\\
&\leq Q\int_0^1\Big(\|\bar w\|_{L^1(0,b)}+\int_0^b t^{-1/2}(tz)^{1/2}(|\ln(tz)|+1)  dt\Big)\\
&\qquad\times(1-z)^{\alpha(0)-1}z^{-\alpha(0)}dz\\
&\leq Q\int_0^1 (1-z)^{\alpha(0)-1}z^{-\alpha(0)}dz\leq Q.
\end{align*}
Thus we complete the proof.
\end{proof}
\subsection{A first-kind VIE under WSC1}\label{Aux}
We analyze the following first-kind VIE
\begin{equation}\label{mh23}
	\int_0^tK(t-s)u(s)ds=f(t)
\end{equation}
where $K(t)$ is the associated kernel  defined by WSC1 \eqref{mh2} for some weighted Sonine kernel $k$.

\begin{theorem}\label{thm3}
Under  the WSC1 (\ref{mh2}), (\ref{mh23})  could be formally reformulated to the following second-kind VIE
\begin{equation}\label{mh24}
	u(t)+\frac{1}{g(0,0)}\int_0^t g_2(0,t-s)u(s)ds=\frac{1}{g(0,0)}\frac{d}{dt}\int_0^tw(0,s)k(s)f(t-s)ds.\end{equation}
\end{theorem}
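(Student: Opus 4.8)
The plan is to multiply the first-kind equation \eqref{mh23} by the non-convolutional factor $w(0,s)k(s)$ and convolve, then differentiate once, exploiting the WSC1 \eqref{mh2} specialized at $s=0$ to collapse the resulting double integral into a clean convolution against $g(0,\cdot)$. The guiding idea is that convolving against $w(0,\cdot)k(\cdot)$ is the weighted analogue of applying the associate kernel, which by the Sonine structure should invert the action of $K$ up to the factor $g$.

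First I would form the quantity $\int_0^t w(0,s)k(s)f(t-s)\,ds$ appearing on the right of \eqref{mh24} and insert the defining relation $f(\tau)=\int_0^{\tau}K(\tau-z)u(z)\,dz$ from \eqref{mh23}. This yields a double integral over the triangle $\{s+\tau\le t,\ s,\tau\ge 0\}$. Interchanging the order of integration by Fubini (licit since $k,K,u\in L^1$) and taking $\tau$ as the outer variable, the inner integral becomes $\int_0^{t-\tau} w(0,z)\,K(t-\tau-z)\,k(z)\,dz$, which is precisely the left-hand side of WSC1 \eqref{mh2} evaluated at $s=0$ and final time $t-\tau$. By \eqref{mh2} this inner integral equals $g(0,t-\tau)$, so the whole expression reduces to the convolution $\int_0^t g(0,t-\tau)\,u(\tau)\,d\tau$.

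Next I would differentiate the resulting identity
$$\int_0^t w(0,s)k(s)f(t-s)\,ds=\int_0^t g(0,t-\tau)\,u(\tau)\,d\tau$$
with respect to $t$. Writing $h(\theta):=g(0,\theta)$, the right-hand side is the convolution $(h*u)(t)$, and differentiating a convolution gives $\tfrac{d}{dt}(h*u)(t)=h(0)\,u(t)+(h'*u)(t)$, where $h(0)=g(0,0)$ and $h'(\theta)=g_2(0,\theta)$. Since condition (a) guarantees $g(0,0)\neq 0$, dividing by $g(0,0)$ and rearranging produces exactly \eqref{mh24}.

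The main obstacle is this differentiation step, which is why the statement says \emph{formally}. To make $\tfrac{d}{dt}(h*u)$ legitimate and to extract the boundary term $g(0,0)u(t)$, I would invoke the absolute continuity of $h(\theta)=g(0,\theta)$ in $\theta$, which is supplied by condition (b): the bound $|g_2(0,\theta)|\le\bar g(\theta)$ with $\bar g\in L^1(0,b)$ ensures $h'=g_2(0,\cdot)\in L^1$, so $h$ is an indefinite integral of an $L^1$ function and the convolution-differentiation formula holds in the almost-everywhere sense. This same $L^1$ integrability of $g_2(0,\cdot)$ is exactly what keeps the kernel of the reformulated second-kind equation in $L^1$, paving the way for a standard Gr\"onwall/Picard well-posedness argument for \eqref{mh24}. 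The only other point needing care is the Fubini interchange, which is immediate from the $L^1$ hypotheses on the kernels together with the boundedness of $w$ established just before the WSC1 definition.
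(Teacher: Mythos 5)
Your proposal is correct and takes essentially the same route as the paper: convolving the first-kind equation \eqref{mh23} against $w(0,\cdot)k(\cdot)$, applying Fubini and the WSC1 \eqref{mh2} specialized at $s=0$ to obtain the identity $\int_0^t g(0,t-s)u(s)\,ds=\int_0^t w(0,s)k(s)f(t-s)\,ds$, and then differentiating and dividing by $g(0,0)\neq 0$. The only difference is presentational (you substitute \eqref{mh23} into the convolution rather than multiplying the equation and integrating), and your explicit justification of the differentiation step via the absolute continuity of $g(0,\cdot)$ from condition (b) is a sound addition to what the paper treats as a formal step.
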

\begin{proof}
We replace $t$ in (\ref{mh23}) by $y$, and then integrate  the resulting equation multiplied by $w(0,t-y)k(t-y)$ on both sides  from $0$ to $t$ to obtain	
\begin{equation}\label{mh25}
	\int_0^tw(0,t-y)k(t-y)\int_0^yK(y-s)u(s)dsdy=\int_0^tw(0,t-y)k(t-y)f(y)dy.
\end{equation}
We interchange the order of the integration on the left-hand side of the above equality and then apply the variable substitution $z=t-y$  to arrive at
\begin{equation}\label{yl2}\begin{array}{l}
\displaystyle \int_0^tw(0,t-y)k(t-y)\int_0^yK(y-s)u(s)dsdy \\[0.1in]
 \displaystyle \quad = \int_0^t\int_s^tw(0,t-y)K(y-s)k(t-y)dyu(s)ds\\[0.1in]
\displaystyle  \quad  =\int_0^t\int_0^{t-s}w(0,z)K(t-s-z)k(z)dzu(s)ds,
\end{array}
\end{equation}
which, combined with \eqref{mh25}, gives
\begin{equation*}
	\int_0^t\int_0^{t-s}w(0,z)K(t-s-z)k(z)dzu(s)ds=\int_0^{t}w(0,z)k(z)f(t-z)dz.
\end{equation*}
We combine  the WSC1 (\ref{mh2}) with $ t$ and $s$  replaced by $t-s$ and $0$, respectively, to reformulate the above equation as follows
\begin{equation}\label{mh26}
	\int_0^tg(0,t-s)u(s)ds=\int_0^{t}w(0,s)k(s)f(t-s)ds.
\end{equation}
We finally differentiate \eqref{mh26} with respect to $t$  to obtain
$$g(0,0)u(t)+\int_0^t g_2(0,t-s)u(s)ds=\frac{d}{dt}\int_0^tw(0,s)k(s)f(t-s)ds, $$
which, together with $g(0,0)\neq 0$ by the condition (a), leads to (\ref{mh24}) and thus completes the proof of the theorem.
\end{proof}

Compared with the original equation (\ref{mh23}),  the transformed equation (\ref{mh24}) exhibits a more feasible form that facilitates analysis. We then establish the equivalence between the original problem (\ref{mh23}) and the transformed equation (\ref{mh24}) in the following theorem.

\begin{theorem}\label{thmK}
Suppose $f$ is bounded. Then under the WSC1 (\ref{mh2}), an $L^1$ solution of the transformed equation (\ref{mh24}) is also a solution to the original equation (\ref{mh23}).
\end{theorem}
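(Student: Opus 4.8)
The plan is to reverse the chain of manipulations that produced \eqref{mh24} from \eqref{mh23} in Theorem \ref{thm3}, and then to close the argument with a uniqueness statement for a homogeneous first-kind Volterra equation. Throughout I set $v(t):=\int_0^tK(t-s)u(s)\,ds$ and aim to prove $v=f$ a.e., which is precisely \eqref{mh23}.

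First I would undo the differentiation in \eqref{mh24}. Multiplying \eqref{mh24} by $g(0,0)$ and using the Leibniz rule, its left-hand side is exactly $\frac{d}{dt}\int_0^tg(0,t-s)u(s)\,ds=g(0,0)u(t)+\int_0^tg_2(0,t-s)u(s)\,ds$. Since $g(0,\cdot)$ is absolutely continuous (its derivative $g_2(0,\cdot)$ is dominated by $\bar g\in L^1$) and $u\in L^1$, both primitives are absolutely continuous and vanish at $t=0$; integrating the resulting identity of $L^1$ functions from $0$ to $t$ via the fundamental theorem of calculus recovers \eqref{mh26}. The hypothesis that $f$ is bounded is what guarantees that the right-hand side of \eqref{mh24} is the genuine a.e. derivative of the absolutely continuous primitive $\int_0^tw(0,s)k(s)f(t-s)\,ds$, so that this integration is legitimate.

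Next I would transport \eqref{mh26} back into a statement about $v$. Applying to $v$ the computation already carried out in \eqref{yl2}---interchanging the order of integration, substituting $z=t-y$, and invoking the WSC1 \eqref{mh2} at $s=0$---gives $\int_0^tw(0,t-y)k(t-y)v(y)\,dy=\int_0^tg(0,t-s)u(s)\,ds$. Combining this with \eqref{mh26} yields, after writing $h(\tau):=w(0,\tau)k(\tau)$ and $\psi:=v-f$, the homogeneous first-kind equation $\int_0^th(t-y)\psi(y)\,dy=0$ for all $t\in[0,b]$. Here $\psi\in L^1$, since $v=K*u\in L^1$ as a convolution of two $L^1$ functions while $f\in L^\infty(0,b)\subset L^1(0,b)$.

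The crux, which I expect to be the main obstacle, is to pass from $h*\psi=0$ to $\psi\equiv0$. One cannot differentiate this relation directly, because $h(\tau)=w(0,\tau)k(\tau)$ may be singular at $\tau=0$ (for instance $k(t)=t^{-\alpha(t)}$), so no finite value $h(0)$ is available to generate a non-degenerate second-kind equation. The device is to convolve with the associate kernel $K$: by associativity of convolution (Fubini, all factors being $L^1$) together with the WSC1 \eqref{mh2} at $s=0$, which states $\int_0^\tau w(0,z)K(\tau-z)k(z)\,dz=g(0,\tau)$, i.e. $K*h=\gamma$ with $\gamma(\tau):=g(0,\tau)$, one obtains $\gamma*\psi=K*(h*\psi)=0$. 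Now $\gamma$ is absolutely continuous with $\gamma(0)=g(0,0)\neq0$ by condition (a) and $\gamma'=g_2(0,\cdot)\in L^1$ by condition (b); differentiating $\gamma*\psi=0$ produces the homogeneous second-kind Volterra equation $g(0,0)\psi(t)+\int_0^tg_2(0,t-s)\psi(s)\,ds=0$, whose only $L^1$ solution is $\psi\equiv0$ by the standard resolvent argument for integrable kernels. Hence $v=f$ a.e., which is \eqref{mh23}, and the proof is complete.
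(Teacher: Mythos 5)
Your proposal is correct and follows essentially the same route as the paper's proof: undoing the differentiation in \eqref{mh24} to recover \eqref{mh26} (using boundedness of $f$, $w$, $g(0,\cdot)$ to kill the constant of integration), reversing the Fubini/substitution computation of \eqref{yl2} to reach the homogeneous first-kind equation for $\psi=v-f$, convolving with $K$ and invoking the WSC1 at $s=0$ to get $g(0,\cdot)*\psi=0$, and differentiating to obtain the homogeneous second-kind VIE. The only cosmetic difference is your final step (a resolvent argument for integrable kernels) where the paper cites a Gronwall-type inequality; both are standard and equivalent here.
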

\begin{proof}
	Let $u(t)\in L^1(0,b)$ be a solution to the transformed equation (\ref{mh24}). Then we invoke \eqref{mh26} to rewrite (\ref{mh24}) as
\begin{equation}\label{yl5}
\frac{d}{dt}\Big(\int_0^t g(0,t-s)u(s)-w(0,s)k(s)f(t-s)ds\Big)=0,
\end{equation}
which implies
\begin{equation}\label{mh27}
\int_0^t g(0,t-s)u(s)-w(0,s)k(s)f(t-s)ds=c_0
\end{equation}
for some constant $c_0$.
We note that $u(t),\,k(t)\in L^1(0,b)$, $f(t)$ is bounded by assumption, $w$ is bounded by condition (i) and $g(0,t)$ is bounded by conditions (a) and (b) as follows
$$|g(0,t)|=\Big|\int_0^tg_2(0,y)dy+g(0,0)\Big|\leq \|\bar g\|_{L^1(0,b)}+\nu^*. $$
We apply these to  pass the limit $t\rightarrow 0^+$ in (\ref{mh27}) to get $c_0=0$, which, together with similar derivations as (\ref{mh25})--(\ref{mh26}), leads to
\begin{equation}\label{mh28}
\int_0^tw(0,t-y)k(t-y)\Big(\int_0^yK(y-s)u(s)ds-f(y)\Big)dy=0.
\end{equation}
Define
$$p(y):=\int_0^yK(y-s)u(s)ds-f(y) $$
such that (\ref{mh28}) could be written as
\begin{equation}\label{mh30}
	\int_0^tw(0,t-y)k(t-y)p(y)dy=0,
\end{equation}
and we remain to prove $p \equiv 0$.

We replace $t$  by $s$ in (\ref{mh30}), multiple $K(t-s)$ on both sides of the resulting equation, and then integrate it from $0$ to $t$ to obtain
\begin{equation*}
\int_0^tK(t-s)\int_0^sw(0,s-y)k(s-y)p(y)dyds=0.
\end{equation*}
We follow \eqref{yl2} to interchange the double integral and then apply the variable substitution $z=s-y$ to get
\begin{equation*}
\int_0^t\int_0^{t-y}w(0,z) K(t-y-z)k(z)dzp(y)dy=0,
\end{equation*}
which could be further reformulated by invoking  the WSC1 (\ref{mh2}) with $ t$ and $s$  replaced by $t-y$ and $0$, respectively, as follows
\begin{equation}\label{mh6}
\int_0^t g(0,t-y)p(y)dy=0.
\end{equation}
  Differentiate the above equation with respect to $t$ to get
\begin{equation}\label{yl6}
p(t)+\frac{1}{g(0,0)}\int_0^t g_2(0,t-y)p(y)dy=0,
\end{equation}
which, combined with condition (b), implies
$$
|p(t)|\leq\frac{1}{\nu_*} \int_0^t |g_2(0,t-y)||p(y)|dy.
$$
Apply the Gronwall inequality, see e.g., \cite[Lemma 2.7]{Lin} to the above inequality to obtain $p(t)\equiv 0$, i.e., $u$ solves the original equation (\ref{nceq}) and we thus complete the proof.
\end{proof}
\begin{theorem}\label{thmzz1}
Suppose $f\in W^{1,1}(0,b)$. Then the first-kind VIE (\ref{mh23}) admits a unique $L^1$ solution $u$.

In particular, taking $f=1$ in (\ref{mh23}) implies that $K$ satisfies the CSC with the associated kernel $u$, which leads to the statement (A).
\end{theorem}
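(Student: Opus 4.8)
The plan is to prove existence by reducing \eqref{mh23} to the reformulated second-kind equation \eqref{mh24}, solving the latter in $L^1$, and lifting its solution back via Theorem \ref{thmK}; and to prove uniqueness by reusing the reformulation-plus-Gronwall argument already contained in the proof of Theorem \ref{thmK}. The first observation is that $f\in W^{1,1}(0,b)$ embeds continuously into $C[0,b]$, so $f$ is bounded on $[0,b]$; this is exactly the hypothesis required to invoke Theorem \ref{thmK}, and it also makes the boundary value $f(0)$ well defined.

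For existence, I would first verify that the right-hand side of \eqref{mh24} is a genuine $L^1$ function. Write $\phi(s):=w(0,s)k(s)$, which lies in $L^1(0,b)$ because $w$ is bounded by conditions (i)--(ii) and $k\in L^1(0,b)$. Using $f(t-s)=f(0)+\int_0^{t-s}f'(\tau)\,d\tau$ with $f'\in L^1$, the Leibniz rule gives $\frac{d}{dt}\int_0^t\phi(s)f(t-s)\,ds=f(0)\phi(t)+\int_0^t\phi(s)f'(t-s)\,ds$, whose first term is in $L^1$ and whose second term is a convolution of two $L^1$ functions, hence in $L^1$ by Young's inequality; dividing by $g(0,0)\neq 0$ (condition (a)) preserves this. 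The kernel $g_2(0,\cdot)/g(0,0)$ is in turn dominated in modulus by $\bar g/\nu_*\in L^1(0,b)$ by conditions (a)--(b). Thus \eqref{mh24} is a linear Volterra equation of the second kind of convolution type with $L^1$ kernel and $L^1$ forcing, which by the standard resolvent (Neumann-series) theory of such equations, the Volterra structure forcing the iterated kernels to decay so the series converges in $L^1(0,b)$ (see e.g. \cite{Bru}), possesses a unique solution $u\in L^1(0,b)$. By Theorem \ref{thmK}, this $u$ also solves the original first-kind equation \eqref{mh23}.

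For uniqueness, suppose $u_1,u_2\in L^1(0,b)$ both solve \eqref{mh23} and set $v:=u_1-u_2$, so that $\int_0^tK(t-s)v(s)\,ds=0$. Multiplying by $w(0,t-y)k(t-y)$ and integrating as in \eqref{mh25}--\eqref{mh26}, interchanging the order of integration by Fubini and invoking WSC1 \eqref{mh2} exactly as in \eqref{yl2}, yields $\int_0^t g(0,t-s)v(s)\,ds=0$. Differentiating and using $g(0,0)\neq 0$ produces the homogeneous second-kind equation $v(t)+\frac{1}{g(0,0)}\int_0^t g_2(0,t-s)v(s)\,ds=0$ as in \eqref{mh6}--\eqref{yl6}, to which the Gronwall inequality applies and forces $v\equiv 0$. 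This step simply reuses the $p\equiv 0$ argument already established in Theorem \ref{thmK}.

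Finally, taking $f\equiv 1\in W^{1,1}(0,b)$, equation \eqref{mh23} reads $\int_0^tK(t-s)u(s)\,ds=1$ for $t\in[0,b]$, which is precisely the classical Sonine condition \eqref{mh1} with $K$ as the Sonine kernel and the unique solution $u$ as its associate; since $K$ is by construction the associated kernel of a WSC1 kernel $k$, this is exactly statement (A). I expect the main obstacle to be the second-kind solvability step: one must justify differentiating the convolution in the forcing term, which is precisely where $f\in W^{1,1}$ rather than merely bounded $f$ is used, so as to guarantee an $L^1$ right-hand side, and then apply the resolvent theory for convolution Volterra equations under only $L^1$ regularity of kernel and data. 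The remaining ingredients, namely the embedding $W^{1,1}\hookrightarrow C$, Young's inequality, and the Gronwall-based uniqueness, are routine and reuse machinery already developed in Theorems \ref{thm3} and \ref{thmK}.
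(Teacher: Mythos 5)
Your proposal is correct and follows essentially the same route as the paper: reduce \eqref{mh23} to the second-kind VIE \eqref{mh24}, solve that equation in $L^1(0,b)$ by classical Volterra resolvent theory (the paper cites \cite[Theorem 2.3.5]{Gri} where you invoke the Neumann-series argument), lift the solution back via Theorem \ref{thmK}, and then set $f\equiv 1$ to obtain statement (A). The extra details you supply---the embedding $W^{1,1}\hookrightarrow C[0,b]$ needed to apply Theorem \ref{thmK}, the Leibniz computation showing the forcing term is in $L^1$, and the explicit homogeneous-equation-plus-Gronwall uniqueness argument---are all consistent with, and merely flesh out, steps the paper leaves implicit.
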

\begin{proof}
If $f\in W^{1,1}(0,b)$, then the right-hand side function of (\ref{mh24}) belongs to $L^1(0,b)$. Then by classical results for the second-kind VIEs, see e.g., \cite[Theorem 2.3.5]{Gri}, the transformed equation (\ref{mh24}) admits a unique solution $u (t)\in L^1(0,b)$, which further ensures the existence and uniqueness of the solution $u (t)\in L^1(0,b)$ to the original integral equation  (\ref{mh23})   by Theorem \ref{thmK}.

Based on the well-posedness of  (\ref{mh23}), we take $f=1$ to find that $K(t)$ satisfies the CSC with its associated kernel $u$, which leads to the statement \textbf{I}(A)  in \S \ref{Sect:Intro}.
\end{proof}

\section{Integral and differential equations under WSC1}\label{IntDiff}
We demonstrate the application of the WSC1 in analyzing the weighted first-kind VIEs and nonlocal differential equations. We first consider the following weighted first-kind VIE \cite{Bru}
\begin{equation}\label{nceq}
\int_0^tw(s,t)k(t-s)u(s)ds=f(t),~~t\in (0,b],
\end{equation}
where $k\in L^1(0,b)$ satisfies the WSC1 \eqref{mh2} with the corresponding $g$.

\begin{theorem}\label{thm}
Under the WSC1 (\ref{mh2}), (\ref{nceq})  could be formally reformulated to the following second-kind VIE
\begin{equation}\label{mh10}
u(t)+\frac{1}{g(t,0)}\int_0^t g_2(s,t-s)u(s)ds=\frac{1}{g(t,0)}\frac{d}{dt}\int_0^tK(t-s)f(s)ds.
\end{equation}
\end{theorem}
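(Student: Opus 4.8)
The plan is to mirror the argument used for Theorem~\ref{thm3}, but with the convolution structure now carried by the unknown $u$ rather than by the right-hand side. The structural observation that drives everything is this: in \eqref{nceq} the kernel is $w(s,t)k(t-s)$, so $k$ is already present in the equation, and the natural way to exploit the WSC1~\eqref{mh2} is to convolve against the associated kernel $K$ alone. This contrasts with Theorem~\ref{thm3}, where the multiplier had to be $w(0,t-y)k(t-y)$ because the equation there carried only $K$.

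Concretely, I would first replace $t$ by $y$ in \eqref{nceq}, multiply both sides by $K(t-y)$, and integrate over $y\in[0,t]$, producing
\[
\int_0^t K(t-y)\int_0^y w(s,y)k(y-s)u(s)\,ds\,dy=\int_0^t K(t-y)f(y)\,dy.
\]
The right-hand side already matches the pre-differentiation form appearing in the target \eqref{mh10}. On the left-hand side I would interchange the order of integration, rewriting it as $\int_0^t\big(\int_s^t K(t-y)w(s,y)k(y-s)\,dy\big)u(s)\,ds$, and then substitute $z=y-s$ in the inner integral to obtain the expression $\int_0^{t-s} w(s,z+s)K\big((t-s)-z\big)k(z)\,dz$.

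The decisive step is to recognize this inner integral as an instance of the WSC1: applying \eqref{mh2} with the parameter kept as $s$ and the time variable replaced by $t-s$ gives exactly $g(s,t-s)$. This collapses the double integral, leaving $\int_0^t g(s,t-s)u(s)\,ds=\int_0^t K(t-s)f(s)\,ds$. I would then differentiate both sides in $t$; by the Leibniz rule the left-hand side yields the boundary term $g(t,0)u(t)$ together with $\int_0^t g_2(s,t-s)u(s)\,ds$, since $\partial_t g(s,t-s)=g_2(s,t-s)$. Dividing through by $g(t,0)$, which is nonzero by condition~(a), produces precisely \eqref{mh10}.

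I expect the main obstacle to be bookkeeping rather than analysis. One must keep the fixed parameter $s$ (the outer integration variable) correctly aligned inside $w$, $g$, and the WSC1 after the substitution $z=y-s$, so that the inner integral is matched to \eqref{mh2} with $s\mapsto s$ and $t\mapsto t-s$—that is, to $g(s,t-s)$ rather than to the original $g(s,t)$. The only other point requiring care is that the differentiation here is purely formal, consistent with the word \emph{formally} in the statement, so no integrability of the differentiated terms needs to be verified at this stage; that justification would be deferred to an equivalence result in the spirit of Theorem~\ref{thmK}.
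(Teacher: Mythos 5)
Your proposal is correct and follows essentially the same route as the paper: multiply by $K$, interchange the order of integration, substitute $z=y-s$, invoke WSC1 with $(s,t)\mapsto(s,t-s)$ to collapse the inner integral to $g(s,t-s)$, then differentiate and divide by $g(t,0)$. The only difference is that you spell out the interchange and substitution steps that the paper compresses into a reference to its earlier derivation (\ref{mh25})--(\ref{mh26}), and you correctly attribute $g(t,0)\neq 0$ to condition (a), whereas the paper's citation of condition (b) at that point appears to be a typo.
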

\begin{proof}
We replace $t$ and $s$ in (\ref{nceq}) by $s$ and $y$, respectively, multiple $K(t-s)$ on both sides of the resulting equation, and then integrate from $0$ to $t$ to obtain
\begin{equation}\label{yl3}
\int_0^tK(t-s)\int_0^sw(y,s)k(s-y)u(y)dyds=\int_0^tK(t-s)f(s)ds.
\end{equation}
 We follow the similar procedures in (\ref{mh25})--(\ref{mh26}) to reformulate \eqref{yl3} as follows
\begin{equation}\label{yl4}
	\int_0^t g(s,t-s)u(s)ds=\int_0^tK(t-s)f(s)ds,
\end{equation}
and then  differentiate this equation with respect to $t$ on both sides to obtain
$$g(t,0)u(t)+\int_0^t g_2(s,t-s)u(s)ds=\frac{d}{dt}\int_0^tK(t-s)f(s)ds, $$
which, together with $g(t,0)\neq 0$ by condition (b), leads to (\ref{mh10}) and thus completes the proof.
\end{proof}

We then follow the proof of Theorem \ref{thmK} to show the equivalence between  (\ref{nceq}) and its reformulated version (\ref{mh10}).

\begin{theorem}\label{thm2}
Suppose $f$ is bounded. Under the WSC1 (\ref{mh2}), an $L^1$ solution of the transformed equation (\ref{mh10}) is also a solution to the original equation (\ref{nceq}).
\end{theorem}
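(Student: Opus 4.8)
The plan is to mirror the proof of Theorem~\ref{thmK}, exchanging the roles played there by $K$ and by the weighted kernel $w(0,\cdot)k(\cdot)$. Let $u\in L^1(0,b)$ solve the transformed equation~(\ref{mh10}). Since the left-hand side of~(\ref{mh10}) multiplied by $g(t,0)$ equals $\frac{d}{dt}\int_0^t g(s,t-s)u(s)\,ds$ by the Leibniz rule, equation~(\ref{mh10}) is precisely the statement that $\frac{d}{dt}\big(\int_0^t g(s,t-s)u(s)\,ds-\int_0^tK(t-s)f(s)\,ds\big)=0$. Integrating gives $\int_0^t g(s,t-s)u(s)\,ds-\int_0^tK(t-s)f(s)\,ds=c_0$ for some constant $c_0$. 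I would then pass to the limit $t\to 0^+$: since $g$ is bounded by conditions (a) and (b), $u,K\in L^1(0,b)$, and $f$ is bounded, both integrals vanish as $t\to 0^+$, forcing $c_0=0$. This recovers~(\ref{yl4}) exactly.

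Next I would rewrite~(\ref{yl4}) in terms of the residual of the original equation. Using the interchange-of-order and substitution identity already established in the proof of Theorem~\ref{thm} (the passage from~(\ref{yl3}) to~(\ref{yl4}) via the WSC1~(\ref{mh2})), the left-hand side of~(\ref{yl4}) equals $\int_0^tK(t-s)\big(\int_0^s w(y,s)k(s-y)u(y)\,dy\big)ds$. Setting $p(s):=\int_0^s w(y,s)k(s-y)u(y)\,dy-f(s)$, which is the residual of the original equation~(\ref{nceq}), equation~(\ref{yl4}) becomes $\int_0^tK(t-s)p(s)\,ds=0$ for all $t\in[0,b]$, and it remains to prove $p\equiv 0$.

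To this end I would convolve against the weighted kernel. Replacing $t$ by $s$ in $\int_0^s K(s-y)p(y)\,dy=0$, multiplying by $w(0,t-s)k(t-s)$, integrating from $0$ to $t$, interchanging the order of integration and substituting $z=t-s$, the inner integral $\int_0^{t-y}w(0,z)K(t-y-z)k(z)\,dz$ collapses under WSC1~(\ref{mh2}) (taken with $s=0$ and $t$ replaced by $t-y$) to $g(0,t-y)$. This yields the first-kind relation $\int_0^t g(0,t-y)p(y)\,dy=0$. Differentiating in $t$ and invoking $g(0,0)\neq 0$ from condition (a) gives $p(t)=-\frac{1}{g(0,0)}\int_0^t g_2(0,t-y)p(y)\,dy$, whence $|p(t)|\leq \nu_*^{-1}\int_0^t \bar g(t-y)|p(y)|\,dy$ by condition (b). The generalized Gronwall inequality then forces $p\equiv 0$, that is, $u$ solves~(\ref{nceq}).

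The main obstacle is the final step $p\equiv 0$, exactly as in Theorem~\ref{thmK}. Two points need care. First, the interchange of integration orders and the substitution require $p\in L^1(0,b)$; this follows from Young's convolution inequality, since $u,k\in L^1(0,b)$, $w$ is bounded by conditions (i)--(ii), and $f$ is bounded, so the weighted-convolution term defining $p$ lies in $L^1(0,b)$. Second, differentiating the first-kind Volterra relation $\int_0^t g(0,t-y)p(y)\,dy=0$ into its second-kind form is the delicate manipulation; it is legitimate because condition (b) bounds $g_2(0,\cdot)$ by the $L^1$ function $\bar g$, which simultaneously supplies the integrable kernel needed for the Gronwall estimate.
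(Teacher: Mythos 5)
Your proposal is correct and follows essentially the same route as the paper's proof: recover \eqref{yl4} by integrating the exact-derivative form of \eqref{mh10} and killing the constant via the limit $t\to 0^+$, reduce to $\int_0^t K(t-s)p(s)\,ds=0$ for the residual $p$ of \eqref{nceq}, then convolve with $w(0,\cdot)k(\cdot)$, invoke WSC1 to collapse the inner integral to $g(0,\cdot)$, differentiate, and apply Gronwall to get $p\equiv 0$. The only cosmetic difference is that the paper cites Theorem \ref{thm3} as a black box for this last reduction, whereas you unpack its proof explicitly.
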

\begin{proof}
Let $u(t)\in L^1(0,b)$ be the solution to the transformed equation (\ref{mh10}). Then we recall \eqref{yl4} to rewrite (\ref{mh10}) as
$$\frac{d}{dt}\Big(\int_0^t g(s,t-s)u(s)-K(t-s)f(s)ds\Big)=0, $$
which implies
$$\int_0^t g(s,t-s)u(s)-K(t-s)f(s)ds=0 $$
 by the analysis below (\ref{yl5}).
We then follow the  same derivations from  \eqref{yl3}--\eqref{yl4} to recover \eqref{yl3}, that is,
\begin{equation}\label{mh12}
\int_0^tK(t-s)\Big(\int_0^s\omega(y,s)k(s-y)u(y)dy-f(s)\Big)ds=0.
\end{equation}
Let
$$q(s):=\int_0^s\omega(y,s)k(s-y)u(y)dy-f(s) $$
such that (\ref{mh12}) could be written as
\begin{equation*}
\int_0^t K(t-s)q(s)ds=0.
\end{equation*}
 We apply Theorem \ref{thm3} to the above equation to get
\begin{equation*}
q(t)+\frac{1}{g(0,0)}\int_0^t g_2(0,t-s)q(s)ds=0,
\end{equation*}
and then follow the proof under \eqref{yl6} to obtain $q(t)\equiv 0$, i.e., $u$ solves the original equation (\ref{nceq}).\end{proof}

\begin{theorem}\label{thmxy1}
Suppose $f\in W^{1,1}(0,b)$ and $|g_2(s,t)|\leq Qt^{-\sigma}$ for some $0<\sigma<1$. Then (\ref{nceq}) admits a unique $L^1$ solution.
\end{theorem}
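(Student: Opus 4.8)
The plan is to combine the equivalence result from Theorem~\ref{thm2} with a well-posedness argument for the transformed second-kind VIE~\eqref{mh10}, specializing to the singular kernel bound $|g_2(s,t)|\leq Qt^{-\sigma}$. First I would observe that, by Theorem~\ref{thm2}, it suffices to prove that the transformed equation~\eqref{mh10} admits a unique $L^1$ solution, since any such solution is then a genuine solution of the original equation~\eqref{nceq}; the hypothesis that $f$ is bounded needed there follows from $f\in W^{1,1}(0,b)\hookrightarrow C[0,b]$. So the core task reduces to analyzing \eqref{mh10}.

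Next I would check that the right-hand side of~\eqref{mh10} lies in $L^1(0,b)$. Writing $F(t):=\frac{d}{dt}\int_0^t K(t-s)f(s)\,ds$, the assumption $f\in W^{1,1}(0,b)$ lets me differentiate under the Leibniz rule to get $F(t)=K(t)f(0)+\int_0^t K(t-s)f'(s)\,ds$, which is an $L^1$ convolution of $K\in L^1$ with the bounded-plus-$L^1$ data, hence $F\in L^1(0,b)$. Dividing by $g(t,0)$ is harmless because $|g(t,0)|\geq\nu_*>0$ by condition~(a); thus the full right-hand side of~\eqref{mh10} is in $L^1(0,b)$.

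I would then treat~\eqref{mh10} as a second-kind VIE with kernel $\mathcal{K}(t,s):=\frac{1}{g(t,0)}g_2(s,t-s)$. The singular bound gives $|\mathcal{K}(t,s)|\leq \frac{Q}{\nu_*}(t-s)^{-\sigma}$, a weakly singular kernel. For such kernels the standard theory applies: the associated integral operator on $L^1(0,b)$ has a resolvent because its iterated kernels eventually become bounded (the convolution of two $(t-s)^{-\sigma}$ singularities produces a $(t-s)^{1-2\sigma}$ kernel via the Beta-function identity, and after finitely many iterations the exponent becomes positive, so a high enough power of the operator is a contraction). Hence~\eqref{mh10} has a unique $L^1$ solution, either by citing the weakly-singular second-kind VIE theory as in \cite[Theorem~2.3.5]{Gri} or by the contraction-on-iterates argument just sketched.

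The main obstacle is the weak singularity: unlike Theorem~\ref{thmzz1}, where condition~(b) only guarantees $g_2\in L^1$ and one can invoke the generalized Gronwall inequality directly, here the explicit $t^{-\sigma}$ bound is exactly what makes the iterated-kernel/resolvent machinery work, so the delicate point is to justify that the weakly singular operator generates an $L^1$-bounded resolvent (equivalently, that some iterate is a contraction). I expect this to follow cleanly from the semigroup property of the singular convolution and the Beta integral $\int_0^t(t-s)^{-\sigma}s^{-\sigma}\,ds=B(1-\sigma,1-\sigma)\,t^{1-2\sigma}$; the remaining steps are routine once uniqueness of the $L^1$ solution of~\eqref{mh10} is in hand, after which Theorem~\ref{thm2} transfers existence and uniqueness back to the original problem~\eqref{nceq}.
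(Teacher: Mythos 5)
Your proposal is correct and shares the paper's overall skeleton---reduce to the transformed equation \eqref{mh10}, check that $f\in W^{1,1}(0,b)$ puts its right-hand side in $L^1(0,b)$, prove unique $L^1$ solvability of \eqref{mh10}, and transfer back to \eqref{nceq} through Theorem~\ref{thm2}---but the central analytic step is carried out by a different mechanism. The paper follows \cite[Theorem 2.4]{LiaSty}: it equips $L^1(0,b)$ with the equivalent exponentially weighted (Bielecki-type) norm $\|q\|_{L^1_\lambda}:=\int_0^b e^{-\lambda t}|q(t)|dt$, under which the Volterra operator with kernel bounded by $Q\nu_*^{-1}(t-s)^{-\sigma}$ has norm $O(\lambda^{\sigma-1})$ and is therefore a one-step contraction once $\lambda$ is large. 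You instead run the iterated-kernel/Neumann-series argument: the Beta-function identity weakens the singularity at each composition, so a sufficiently high power of the operator is a contraction. Both arguments exploit exactly the hypothesis $|g_2(s,t)|\leq Qt^{-\sigma}$, and both must cope with the fact that the kernel $g_2(s,t-s)/g(t,0)$ is \emph{not} of convolution type---which is precisely why the paper does not simply cite \cite[Theorem 2.3.5]{Gri} here as it does for \eqref{mh24} in Theorem~\ref{thmzz1}; for the same reason, your fallback suggestion to cite that convolution-kernel result is the weak point of your write-up, though your primary self-contained argument does not need it. Your route is more elementary and avoids introducing the equivalent norm; the paper's weighted norm buys a shorter, one-step contraction. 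Two small points to tighten: first, once an iterated kernel becomes bounded, boundedness alone does not make that iterate a contraction---one must iterate further and use the factorial ($b^k/k!$-type) decay of powers of Volterra operators with bounded kernels, a standard but necessary remark; second, your observation that $W^{1,1}(0,b)\hookrightarrow C[0,b]$ supplies the boundedness of $f$ required by Theorem~\ref{thm2} is a detail the paper leaves implicit, and is a welcome addition.
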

\begin{proof}
 The condition $f\in W^{1,1}(0,b)$ implies the right-hand side function of (\ref{mh10}) belongs to $L^1(0,b)$. Then we could follow the same technique as \cite[Theorem 2.4]{LiaSty} to show that the transformed equation (\ref{mh10}) admits a unique solution in $L^1(0,b)$ by means of the equivalent norm $\|q\|_{L^1_\lambda(0,b)}:=\int_0^b e^{-\lambda t}|q(t)|dt$. Then the well-posedness of the integral equation  (\ref{nceq})  follows from that of the transformed equation (\ref{mh10}) and Theorem \ref{thm2}.
\end{proof}
It is straightforward to extend the idea to investigate the following nonlocal differential equation, which has applications in various fields such as the viscoelasticity \cite{SamKil}
\begin{equation}\begin{array}{c}\label{mh13}
\displaystyle \frac{d}{dt}\int_0^tw(s,t)k(t-s)u(s)ds=f(t), ~~t\in (0,b],\\[0.1in]
 \displaystyle \int_0^t w(s,t)k(t-s)u(s)ds\Big|_{t=0}=c,
\end{array}
\end{equation}
where $c$ is a given constant and $k$ satisfies the WSC1 \eqref{mh2}.  In particular, the differential equation in \eqref{mh13} generalizes the fractional differential equations (with $w \equiv 1$ and $k = \frac{t^{-\beta}}{\Gamma(1-\beta)}$ for $ 0 < \beta<1 $) that have been studied in \cite{CarNgo,LuoHuy}.
\begin{theorem}\label{thm:diff}
Under the WSC1 (\ref{mh2}), the nonlocal differential equation (\ref{mh13}) is equivalent to the following second-kind VIE
\begin{equation}\label{mh14}
u(t)+\frac{1}{g(t,0)}\int_0^t g_2(y,t-y)u(y)dy=c\frac{K(t)}{g(t,0)}+\frac{1}{g(t,0)}\int_0^tK(t-y)f(y)dy.
\end{equation}

Suppose $f\in L^{1}(0,b)$ and $|g_2(s,t)|\leq Qt^{-\sigma}$ for some $0<\sigma<1$. Then (\ref{mh13}) admits a unique $L^1$ solution.
\end{theorem}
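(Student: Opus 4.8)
The plan is to reduce the nonlocal differential equation \eqref{mh13} to a weighted first-kind VIE of the form \eqref{nceq} and then feed it into the machinery already built in Theorems \ref{thm}--\ref{thmxy1}. First I would integrate the differential equation in \eqref{mh13} from $0$ to $t$. Setting $F(t):=\int_0^tw(s,t)k(t-s)u(s)ds$, the equation reads $F'(t)=f(t)$ with $F(0)=c$, so $F(t)=c+\int_0^tf(\tau)d\tau=:\tilde f(t)$. Thus \eqref{mh13} is recast as
\begin{equation*}
\int_0^tw(s,t)k(t-s)u(s)ds=\tilde f(t),\qquad \tilde f(t)=c+\int_0^tf(\tau)d\tau,
\end{equation*}
which is exactly \eqref{nceq} with $f$ replaced by $\tilde f$; the initial condition is automatically encoded, since passing $t\to 0^+$ gives $\tilde f(t)\to c$.

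Next I would apply Theorem \ref{thm} to this reduced equation, obtaining \eqref{mh10} with $f$ replaced by $\tilde f$. The only nonroutine step is evaluating the right-hand side $\frac{d}{dt}\int_0^tK(t-s)\tilde f(s)ds$ when $K$ may be singular at the origin. Splitting $\tilde f=c+\tilde g$ with $\tilde g(s):=\int_0^sf(\tau)d\tau$, the constant part gives $c\,\frac{d}{dt}\int_0^tK(t-s)ds=cK(t)$, while for $\tilde g$ I would use the convolution symmetry $K\ast\tilde g=\tilde g\ast K$ together with $\tilde g(0)=0$ and $\tilde g'=f$ to get $\frac{d}{dt}\int_0^t\tilde g(t-s)K(s)ds=\int_0^tf(t-s)K(s)ds=\int_0^tK(t-s)f(s)ds$. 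Combining the two terms yields precisely \eqref{mh14}, where the contribution $cK(t)/g(t,0)$ is exactly the trace of the initial datum. For the reverse implication, and hence the claimed equivalence, I would invoke Theorem \ref{thm2}: noting that $\tilde f$ is bounded (indeed $|\tilde f|\le |c|+\|f\|_{L^1(0,b)}$), an $L^1$ solution of \eqref{mh14} solves the reduced first-kind VIE, whose differentiation recovers \eqref{mh13} and whose trace at $t=0$ recovers the constant $c$.

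For the well-posedness under $f\in L^1(0,b)$ and $|g_2(s,t)|\le Qt^{-\sigma}$, I would first check that the right-hand side of \eqref{mh14} lies in $L^1(0,b)$: the factor $1/g(t,0)$ is bounded by $1/\nu_*$ via condition (a), $K\in L^1(0,b)$ by hypothesis, and $K\ast f\in L^1(0,b)$ by Young's inequality, so $cK(t)/g(t,0)+\frac{1}{g(t,0)}\int_0^tK(t-y)f(y)dy\in L^1(0,b)$. The kernel of the integral operator in \eqref{mh14} is $g_2(y,t-y)/g(t,0)$, dominated by $Q(t-y)^{-\sigma}$ and hence weakly singular. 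Thus \eqref{mh14} is a second-kind VIE with $L^1$ forcing and weakly singular kernel, and I would establish existence and uniqueness of an $L^1$ solution exactly as in Theorem \ref{thmxy1}, using the equivalent weighted norm $\|q\|_{L^1_\lambda(0,b)}=\int_0^b e^{-\lambda t}|q(t)|dt$ and the contraction argument of \cite[Theorem 2.4]{LiaSty}. Transferring this solution back through Theorem \ref{thm2} then delivers the unique $L^1$ solution of \eqref{mh13}.

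The step I expect to be the main obstacle is the convolution differentiation $\frac{d}{dt}\int_0^tK(t-s)\tilde f(s)ds$: because $K$ is generally singular at $t=0$ (for instance $K(t)=t^{\alpha(0)-1}/\kappa$), one cannot differentiate under the integral sign directly, and the correct extraction of the $cK(t)$ term relies on rewriting the convolution so that the boundary value $\tilde g(0)=0$ annihilates the singular contribution while the constant $c$ produces the genuine $K(t)$ term. Everything else reduces to direct applications of the previously established Theorems \ref{thm} and \ref{thm2} and the weak-singularity argument of Theorem \ref{thmxy1}.
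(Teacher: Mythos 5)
Your proposal is correct and follows essentially the same route as the paper's proof: integrate the equation to reduce \eqref{mh13} to the weighted first-kind VIE \eqref{nceq} with forcing $c+\int_0^t f$, apply Theorem \ref{thm} and evaluate the differentiated convolution to obtain \eqref{mh14}, recover the converse via Theorem \ref{thm2} and differentiation of the reduced equation, and prove well-posedness by the weighted-norm argument of Theorem \ref{thmxy1}. Your only additions are welcome extra care at the points the paper leaves implicit, namely the rigorous extraction of the $cK(t)$ term when $K$ is singular at the origin and the explicit check that the forcing is bounded so Theorem \ref{thm2} applies.
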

\begin{proof}
We first prove that \eqref{mh13} could be reformulated into \eqref{mh14}. Define $F(t):=\int_0^t f(s)ds + c$ and then integrate the first equation in (\ref{mh13}) from $0$ to $t$  to obtain
\begin{equation}\label{mh18}
	\int_0^tw(s,t)k(t-s)u(s)ds=F(t).
\end{equation}
Then we apply Theorem \ref{thm} to transform (\ref{mh18}) into the following second-kind
VIE
\begin{equation}\label{yl7}
	u(t)+\frac{1}{g(t,0)}\int_0^t g_2(y,t-y)u(y)dy= \frac{1}{g(t,0)}\frac{d}{dt}\int_0^tK(t-y)F(y)dy,
\end{equation}
where the right-hand side term could be further evaluated as
\begin{equation}\label{yl8}
	 \frac{1}{g(t,0)}\frac{d}{dt}\int_0^tK(t-y)F(y)dy=c\frac{K(t)}{g(t,0)}+\frac{1}{g(t,0)}\int_0^tK(t-y)f(y)dy,
\end{equation}
which gives \eqref{mh14}.

Conversely, suppose $u(t)\in L^1(0,b)$ is a solution to the transformed equation \eqref{mh14}. Then \eqref{yl7}--\eqref{yl8} and Theorem \ref{thm2} imply that $u$ solves the integral equation  \eqref{mh18}. As $F$ is differentiable, we differentiate \eqref{mh18} to arrive at the first equation in \eqref{mh13}. The initial condition could be obtained by taking the limits $t \rightarrow 0^+$ on both sides of \eqref{mh18}.

The second statement of this theorem could be proved similarly as Theorem \ref{thmxy1}. Thus we complete the proof.
\end{proof}

\section{Relation between CSC and  WSC2}\label{equivalence}
We prove tight connections between the CSC (\ref{mh1}) and  the WSC2 (\ref{WSC2}), which indicates that for non-degenerate weight functions, the CSC  and its weighted version are closely related to each other.
\begin{theorem}\label{thmequ}
	If a kernel $k(t)\in L^1(0,b)$ satisfies WSC2 (\ref{WSC2}), then it satisfies CSC (\ref{mh1}).
\end{theorem}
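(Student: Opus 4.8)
The plan is to exploit the structural symmetry between the two weighted conditions: the defining identity of WSC2 for $k$ is exactly the defining identity of WSC1 read with the roles of the kernel and its associate interchanged. Concretely, I would start from \eqref{WSC2},
\[
\int_0^t w(s,z+s)\,k(t-z)\,K(z)\,dz = G(s,t), \qquad 0\le s+t\le b,
\]
and observe that if one designates $K$ as the WSC1 kernel (the un-shifted factor $K(z)$) and $k$ as its associate (the shifted factor $k(t-z)$), then this is precisely the WSC1 identity \eqref{mh2} with the right-hand side $g$ replaced by $G$. Since the weight $w$ is unchanged and $G$ satisfies conditions (a) and (b) by the very definition of WSC2, every hypothesis underlying WSC1 holds verbatim for the pair $(K,k)$. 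Hence $K$ satisfies WSC1 with associated kernel $k$.

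With this reinterpretation in hand, I would simply invoke the well-posedness already established for WSC1. Applying Theorem \ref{thmzz1} to the pair $(K,k)$, the first-kind VIE $\int_0^t k(t-s)\,u(s)\,ds = f(t)$---which is \eqref{mh23} with the associated kernel now being $k$---admits a unique $L^1$ solution whenever $f\in W^{1,1}(0,b)$. Taking $f\equiv 1$, which lies in $W^{1,1}(0,b)$, yields a $u\in L^1(0,b)$ satisfying $\int_0^t k(t-s)\,u(s)\,ds = 1$. This is exactly statement (A) applied to $(K,k)$, and it says that $k$ satisfies the CSC \eqref{mh1} with associated kernel $u$, which is the assertion of the theorem.

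Consequently the argument is short and I anticipate no genuine analytic obstacle; the entire content lies in recognizing the role-reversal symmetry between \eqref{mh2} and \eqref{WSC2}. The one point demanding care is the transfer of hypotheses under this relabeling: one must check that it is $G$ (not $g$) that now plays the part of the right-hand side and that its conditions (a)--(b), together with properties (i)--(ii) of $w$, are exactly what Theorems \ref{thm3} and \ref{thmzz1} consume. It is also worth emphasizing that the CSC associate $u$ produced here need not coincide with $K$, consistent with the remark following \eqref{WSC2} that the associated kernels appearing in the weighted conditions and in the CSC may be distinct functions.
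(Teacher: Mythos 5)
Your proof is correct. The relabeling at its heart is legitimate: in \eqref{mh2} the Sonine kernel enters as the un-shifted factor and its associate as the shifted one, so the identity \eqref{WSC2} for the pair $(k,K)$ with right-hand side $G$ is verbatim the identity \eqref{mh2} for the pair $(K,k)$ with right-hand side $G$; since $G$ is required to satisfy (a)--(b), the weight $w$ is unchanged, and $f\equiv 1$ lies in $W^{1,1}(0,b)$ and is bounded (as Theorems \ref{thmzz1} and \ref{thmK} respectively require), Theorem \ref{thmzz1} applied with the roles exchanged yields $u\in L^1(0,b)$ with $\int_0^t k(t-s)u(s)\,ds=1$, which (after the trivial change of variables making convolution commutative) is the CSC for $k$ --- indeed the paper's own target \eqref{mh51} has exactly this form. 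The paper's proof differs in organization, though not in substance: it rederives everything inside the WSC2 setting, multiplying \eqref{mh51} by $w(0,t-s)K(t-s)$, passing to the second-kind VIE \eqref{SC:yli} with kernel $G_2(0,\cdot)$, solving it by \cite[Theorem 2.3.5]{Gri}, and then verifying via a Gronwall argument that this solution solves \eqref{mh51}; this is precisely the argument of Theorems \ref{thm3}, \ref{thmK} and \ref{thmzz1} with $k$ and $K$ interchanged and $g$ replaced by $G$. Your reduction-by-symmetry eliminates that duplication and makes explicit the duality the paper only hints at when it introduces WSC2 by ``interchanging the order of $K$ and $k$''; what the paper's longer route buys is a self-contained proof that displays the concrete second-kind VIE \eqref{SC:yli} characterizing the CSC associate $u$, which your argument produces only implicitly through \eqref{mh24}. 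Your closing caution is also apt: the associate $u$ obtained this way need not equal $K$, and neither your argument nor the paper's claims it does.
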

\begin{proof}
For a kernel $k(t)\in L^1(0,b)$ satisfying WSC2 (\ref{WSC2}), we aim at proving that there exists a $u \in L^1(0, b)$ such that
\begin{equation}\label{mh51}
\int _0^tk(t-s)u(s)ds=1.
\end{equation}

We multiple $w(0,t-s)K(t-s)$ on both sides of (\ref{mh51}) and follow the similar procedures in (\ref{mh25})--(\ref{yl2}) to arrive at
\begin{equation*}
	\int _0^t\int_0^{t-s}w(0,t-z-y)k(z)K(t-z-y)dzu(y)dy=\int_0^tw(0,t-s)K(t-s)ds,
\end{equation*}
which, combined with
\begin{equation*}
 \int_0^tw(s,t+s-z)k(z)K(t-z)dz=G(s,t)
\end{equation*}
derived from (\ref{WSC2}) by the variable substitution, implies
\begin{equation}\label{mh52}
	\int_0^tu(y)G(0,t-y)dy=\int_0^tw(0,t-s)K(t-s)ds.
\end{equation}
Differentiate (\ref{mh52})   with respect to $t$ on both sides to obtain the following second-kind VIE
\begin{equation}\label{SC:yli}
	u(t)G(0,0)+\int_0^tu(y)G_2(0,t-y)dy=\frac{d}{dt}\int_0^tw(0,t-s)K(t-s)ds.
\end{equation}
Based on  the properties of $G$ in the WSC2 (\ref{WSC2}) and the classical results on the second-kind VIE \cite[Theorem 2.3.5]{Gri}, \eqref{SC:yli} admits a unique solution $u(t)\in L^1(0,b)$.

Then we show that $u$ also solves (\ref{mh51}).
We employ \eqref{mh52} to rewrite (\ref{SC:yli}) as
\begin{equation*}
\frac{d}{dt}\Big(\int_0^tu(s)G(0,t-s)- w(0,t-s)K(t-s)ds\Big)=0,
\end{equation*}
which implies
\begin{equation*}
\int_0^tu(s)G(0,t-s)- w(0,t-s)K(t-s)ds =0
\end{equation*}
by the analysis  below \eqref{yl5}.
We then follow the  same derivations from  \eqref{mh51}--\eqref{mh52} to obtain
\begin{equation}\label{SC:yli2}
\int_0^tw(0,t-s)K(t-s)P(s)ds=0,
\end{equation}
where
$$P(s):=\int_0^s k(s-y)u(y)dy-1. $$
We multiple $k(t-s)$ on both sides of \eqref{SC:yli2} and follow the similar procedures in (\ref{mh25})--(\ref{yl2}) to obtain
\begin{equation*}
\int_0^t\int_0^{t-y}w(0,z) k(t-y-z)K(z)dzP(y)dy=0,
\end{equation*}
which could be further reformulated by invoking  the WSC2 (\ref{WSC2}) with $ t$ and $s$  replaced by $t-y$ and $0$, respectively, as follows
\begin{equation*}
\int_0^t G(0,t-y)P(y)dy=0.
\end{equation*}
We note that the above equation exhibits a similar form as \eqref{mh6} with $g$ and $p$ replaced by $G$ and $P$, respectively.
   We then follow the procedures below \eqref{mh6}  to obtain $P(t)\equiv 0$, i.e., $u$ solves the original equation (\ref{mh51}) and we thus complete the proof.
\end{proof}

We then follow  \cite{hanyga} to introduce the concept of LICM function to assist the proof of Theorem \ref{thmWSC2}, which will show that if $k$ satisfies CSC (\ref{mh1}) and is an LICM function, then $k$ satisfies WSC2  (\ref{WSC2}). By \cite{hanyga}, an LICM function is a locally integrable and infinitely differentiable function satisfying
$(-1)^n f^{(n)}(t)\geq 0$ for $n\geq 0$.

\begin{lemma}\label{lemk}
If an LICM function $k$ satisfies CSC (\ref{mh1}), then $k(t)<\infty $ for $t>0$.
\end{lemma}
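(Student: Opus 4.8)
The plan is to prove Lemma \ref{lemk} by contradiction, exploiting the rigidity of completely monotone functions. Suppose for some $t_0>0$ we had $k(t_0)=\infty$. Since $k$ is an LICM function, it is infinitely differentiable on $(0,\infty)$, so in particular $k(t_0)$ is a genuine finite value at any interior point $t_0>0$; the only possible blow-up is therefore at the origin. First I would make this precise: the hypotheses ``$k$ is locally integrable'' and ``$k$ is infinitely differentiable with $(-1)^nk^{(n)}(t)\geq 0$'' should be read as holding on the open interval $(0,b)$ (or $(0,\infty)$), so that $k$ takes finite values at each $t>0$ by definition of differentiability, while integrability at $0$ permits a singularity as $t\to 0^+$.

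With that reading the statement is essentially immediate, so I expect the intended content is slightly different: to rule out the possibility that the CSC forces $k$ itself to be identically infinite or to blow up in the interior, one uses the structure of the convolution identity \eqref{mh1}. The key step is to observe that complete monotonicity makes $k$ nonincreasing on $(0,b)$: since $-k'(t)=(-1)^1k^{(1)}(t)\geq 0$, the function $k$ is monotone decreasing. Hence the only candidate for an infinite value is the left endpoint, and for every fixed $t>0$ we automatically have $k(t)\leq k(s)$ for $0<s\leq t$, so $k(t)$ is bounded by the finite value $k(s)$ at any smaller $s$ where $k$ is finite. I would then argue that $k$ cannot be identically $+\infty$ near $0$: local integrability gives $\int_0^\epsilon k(s)\,ds<\infty$, and combined with monotonicity this forces $k(s)<\infty$ for a.e.\ $s$, hence (by continuity away from $0$) for every $s>0$.

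The cleaner route, which I would actually write, is: fix any $t>0$ and choose $\delta$ with $0<\delta<t$. By local integrability $\int_0^\delta k(s)\,ds<\infty$, so there exists $s_*\in(0,\delta)$ with $k(s_*)<\infty$. By the monotonicity $k(t)\leq k(s_*)<\infty$. This establishes $k(t)<\infty$ for every $t>0$ using only (i) the sign condition for $n=1$ and (ii) local integrability, which are exactly the defining properties invoked. I would present the monotonicity deduction and the integrability-to-finiteness deduction as the two explicit steps.

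The main obstacle is conceptual rather than computational: it lies in pinning down precisely what ``$k(t)<\infty$'' is meant to assert given that infinite differentiability already confers finiteness at interior points. The delicate point to handle carefully is the behaviour as $t\to 0^+$, where local integrability is compatible with $k(0^+)=+\infty$ (as for $k(t)=t^{-\beta}$); I must make sure the argument concludes finiteness only on the open interval $t>0$ and does not overclaim at the endpoint. I would state the lemma's conclusion as holding for each $t>0$, derive it from monotonicity plus the existence of a nearby point of finiteness supplied by $\int_0^\delta k<\infty$, and flag that no statement is made about $k(0)$.
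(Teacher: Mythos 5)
Your proof is correct, but it takes a genuinely different and more elementary route than the paper. The paper argues by contradiction \emph{through the Sonine condition}: assuming $k(t_0)=\infty$, monotonicity ($k'\le 0$) forces $k\equiv\infty$ on $(0,t_0)$; it then invokes \cite[Theorem 4.1]{hanyga} to conclude that the associated kernel $K$ is also LICM and unbounded near $t=0$, hence $K\ge Q_0>0$ on some $(0,\hat t_0)$, so that
$$\int_0^{\bar t}K(\bar t-s)k(s)\,ds\ \ge\ Q_0\int_0^{\bar t}k(s)\,ds=\infty,$$
contradicting the identity \eqref{mh1}. Your argument dispenses with the CSC and with the associated kernel entirely: from $(-1)^nk^{(n)}\ge 0$ with $n=0,1$ you get $k\ge 0$ nonincreasing, local integrability yields a point $s_*\in(0,\delta)$ with $k(s_*)<\infty$, and monotonicity gives $k(t)\le k(s_*)<\infty$ for every $t\ge s_*$. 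This shares the monotonicity step with the paper but replaces the CSC-based contradiction with the observation that a nonnegative, nonincreasing, locally integrable function cannot be infinite at any interior point; consequently you prove a strictly stronger statement (finiteness of \emph{every} LICM function on $(0,b]$, with the CSC hypothesis never used) while avoiding the dependence on Hanyga's structural result about $K$. Your opening remark is also well taken: under the standard reading of LICM (infinitely differentiable on the open interval), finiteness at interior points is automatic, so both proofs are really formalizing why no blow-up can occur away from $t=0$; your version isolates the minimal hypotheses that do this, whereas the paper's version is consistent with its broader theme of playing $k$ and $K$ off each other but uses more machinery than the conclusion requires.
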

\begin{proof}
By \cite[Theorem 4.1]{hanyga},  the associated kernel $K$ of $k$ is also an LICM function, and both $k$ and $K$ are unbounded near $t=0$.
 We  then intend to prove the lemma by contradiction. If not, we assume there exists a $t_0 \in (0, b]$ such that $k(t_0)=\infty $. By the property of the LICM   function, we have $k^\prime (t) \le 0$ on $(0, t_0) $, and thus $k(t) = \infty$  on $(0, t_0)$. Since $K(t)$  is a LICM function and  unbounded near $t=0$, there exists a $\hat t_0 \in (0, t_0)$ such that $K(t) \ge Q_0 $ on $(0, \hat t_0)$ for some $Q_0 > 0$.  We incorporate these to find that
$$\int_0^{\bar t} K(\bar t-s)k(s)ds \ge \int_0^{\bar t} Q_0 k(s) ds  =\infty, \quad \forall  \bar t \in (0, \hat t_0],$$
which contradicts to the CSC \eqref{mh1} and thus completes the proof.
\end{proof}

\begin{theorem}\label{thmWSC2}
If $k$ satisfies CSC (\ref{mh1}) and is a LICM function, then $k$ satisfies WSC2  (\ref{WSC2}) for weight functions $w$ satisfying condition (i) and $|w_2(s,t)|\leq Q$ for $0\leq s\leq t\leq b$ (a stronger version of the condition (ii)). Furthermore,
 the associated kernel $K$ of $k$ in CSC is also an associated kernel of $k$ in WSC2.
\end{theorem}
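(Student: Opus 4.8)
The plan is to take $G(s,t)$ to be exactly the left-hand side of WSC2 \eqref{WSC2} built from the CSC associate kernel $K$, namely $G(s,t):=\int_0^t w(s,z+s)k(t-z)K(z)\,dz$, and to verify that this $G$ meets conditions (a) and (b); since $K$ is the CSC kernel by construction, the final assertion that $K$ remains an associate kernel in WSC2 is then automatic. Throughout I would use that $k$ is LICM and that, by \cite[Theorem 4.1]{hanyga} (as already invoked in Lemma \ref{lemk}), $K$ is LICM as well, so that $k,K\ge 0$ and $k',K'\le 0$ on $(0,b)$, and that the CSC \eqref{mh1} together with the commutativity of convolution yields $\int_0^t k(t-z)K(z)\,dz=1$. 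The key preparatory step is to isolate the inhomogeneity of the weight by writing $w(s,z+s)=w(s,s)+W(s,z)$ with $W(s,z):=\int_0^z w_2(s,\theta+s)\,d\theta$, so that $|W(s,z)|\le Qz$ and $W(s,0)=0$; this turns $G$ into $G(s,t)=w(s,s)+R(s,t)$, where $R(s,t):=\int_0^t W(s,z)K(z)k(t-z)\,dz$ carries all the $t$-dependence.

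For condition (a), I would estimate $R$ crudely: using $|W(s,z)|\le Qz\le Qt$ on $[0,t]$ and the nonnegativity of $k,K$, we get $|R(s,t)|\le Q\,t\int_0^t K(z)k(t-z)\,dz=Qt\to 0$ as $t\to 0^+$. Hence $G(s,0)=w(s,s)$, and condition (i) gives $\mu_*\le|G(s,0)|\le\mu^*$, i.e. (a) holds with $\nu_*=\mu_*$ and $\nu^*=\mu^*$.

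The hard part is condition (b), and it is here that the factor $z$ produced by $W$ is decisive. Differentiating the convolution $R(s,t)=\int_0^t W(s,z)K(z)k(t-z)\,dz$, the boundary contribution at the endpoint $z=0$ is $W(s,z)K(z)\,k(t)$ in the limit $z\to0^+$, which vanishes because $W(s,0)=0$ and $zK(z)\to 0$ as $z\to0^+$ (the latter from $K$ decreasing and $K\in L^1$, via $\tfrac{z}{2}K(z)\le\int_{z/2}^z K\to0$), leaving $G_2(s,t)=\int_0^t w_2(s,z+s)K(z)k(t-z)\,dz+\int_0^t W(s,z)K'(z)k(t-z)\,dz$. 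The first integral is bounded by $Q$ using $|w_2|\le Q$ and $\int_0^t K(z)k(t-z)\,dz=1$. For the second, $|W(s,z)|\le Qz$ and $K'\le0$ give the $s$-independent bound $\Big|\int_0^t W(s,z)K'(z)k(t-z)\,dz\Big|\le Q\int_0^t z(-K'(z))k(t-z)\,dz=:\bar g_2(t)$. The crucial point is not a pointwise bound on $\bar g_2$ but its integrability: by Tonelli and the substitution $\tau=t-z$, $\int_0^b\bar g_2(t)\,dt\le Q\,\|k\|_{L^1(0,b)}\int_0^b z(-K'(z))\,dz$, and an integration by parts (legitimate since $zK(z)\to0$) gives $\int_0^b z(-K'(z))\,dz=\|K\|_{L^1(0,b)}-bK(b)\le\|K\|_{L^1(0,b)}<\infty$. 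Thus $\bar g_2\in L^1(0,b)$, and taking $\bar g(t):=Q+\bar g_2(t)\in L^1(0,b)$ yields $|G_2(s,t)|\le\bar g(t)$, which is precisely (b). I expect the only delicate points to be the justification of differentiating under the singular convolution and the vanishing of $zK(z)$ (and $zk(z)$) at the origin; once these are in place the remainder is bookkeeping.
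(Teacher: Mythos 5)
Your proposal is correct and follows essentially the same route as the paper's proof: the same decomposition $G(s,t)=w(s,s)+\int_0^t\big(w(s,z+s)-w(s,s)\big)K(z)k(t-z)\,dz$ obtained from the CSC, the same treatment of the three pieces of $G_2$ (a boundary term killed by $zK(z)\to 0$ together with finiteness of $k$ away from $0$, a $w_2$-term bounded by $Q$ via the CSC, and a $K'$-term handled not pointwise but by Tonelli plus integration by parts to get an $L^1$ majorant $\bar G(t)$). The only minor difference is that you prove $zK(z)\to 0$ elementarily from monotonicity and integrability of $K$, where the paper cites Hanyga's results; this is a small self-contained improvement, not a different approach.
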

\begin{proof}
For an LICM function $k$ satisfying CSC (\ref{mh1}) with the associated kernel $K$ and a weight function $w$ satisfying condition (i) and $|w_2(s,t)|\leq Q$ for $0\leq s\leq t\leq b$, we intend to prove that $G(s,t)$ defined by (\ref{WSC2}) satisfies conditions $(a)$--$(b)$.

 For the sake of analysis, we invoke the CSC (\ref{mh1}) to reformulate $G(s, t)$ as follows
\begin{equation}\label{G}
	G(s,t)=w(s,s)+\int_0^t\big(w(s,t-z+s)-w(s,s)\big)k(z)K(t-z)dz.
\end{equation}
We incorporate  the CSC (\ref{mh1}) to obtain
\begin{equation*}
\begin{array}{l}
\displaystyle \Big|\int_0^t\big(w(s,t-z+s)-w(s,s)\big)k(z)K(t-z)dz\Big|\\[0.1in]
\displaystyle \quad  \leq Q\int_0^t(t-z)k(z)K(t-z)dz\\
\displaystyle \quad  \leq Qt\int_0^tk(z)K(t-z)dz = Qt \rightarrow 0 ~\mbox{as}~~ t \rightarrow 0^+,
\end{array}
\end{equation*}
which, combined with \eqref{G}, gives  $ G(s,0)=w(s,s)$ such that the condition (a) is satisfied.

To prove the condition $(b)$, we incorporate \eqref{G} to obtain
\begin{equation*}\begin{array}{l}
\displaystyle G_2(s,t)=\displaystyle\lim_{z\rightarrow t}\big(w(s,t-z+s)-w(s,s)\big)k(z)K(t-z) \\[0.125in]
\displaystyle \qquad \qquad +\int _0^tw_2(s,t-z+s)k(z)K(t-z)dz\\[0.125in]
\displaystyle \qquad \qquad  +\int_0^t\big(w(s,t-z+s)-w(s,s)\big)k(z)K'(t-z)dz=:C_1+C_2+C_3,
\end{array}
\end{equation*}
where $C_1$ could be bounded directly by the assumptions of the theorem
\begin{equation}\label{thmA1}
|C_1|\leq \lim_{z\rightarrow t}Q(t-z)k(z)K(t-z).
\end{equation}
 By \cite[Theorem 3.1]{hanyga}, we have $\displaystyle\lim_{t\rightarrow 0^+}tK(t)=0$.
We incorporate this with Lemma \ref{lemk} to conclude that the right-hand side term in \eqref{thmA1} and thus $C_1$ is $0$. By the assumptions of the theorem and the CSC (\ref{mh1}), we could bound $C_2$ in an analogous manner as follows
\begin{equation*}
|C_2|\leq Q\int_0^tk(z)K(t-z)dz=Q.
\end{equation*}
By \cite[Theorem 4.1]{hanyga}, the associated kernel $K$ of $k$ in CSC is also an LICM function with $K'(t)\leq 0$. We incorporate this property and the assumptions of the theorem to bound $C_3$ as follows
\begin{equation}\label{thmA3}
|C_3|\leq -Q\int_0^t(t-z)k(z)K'(t-z)dz=:D(t),
\end{equation}
and we then intend to prove $D(t)\in L^1(0,b)$. We invoke  the CSC (\ref{mh1}) and $\displaystyle\lim_{t\rightarrow 0^+}tK(t)=0$ to obtain
\begin{equation}\label{thmB}\begin{array}{rl}
 \displaystyle\int_0^b D(t)dt &\hspace{-0.1in}\displaystyle  =-Q\int_0^b\int_0^t(t-z)k(z)K'(t-z)dzdt\\[0.125in]
&\hspace{-0.1in} \displaystyle =-Q\int _0^bk(z)\int _z^b(t-z)K'(t-z)dtdz\\[0.125in]
&\hspace{-0.1in} \displaystyle =-Q\int _0^bk(z)\Big((b-z)K(b-z)-\int _z^bK(t-z)dt\Big)dz\\[0.125in]
&\hspace{-0.1in}\displaystyle  \leq Q \int _0^b(b-z)k(z)K(b-z)dz+Q\int _0^bk(z)\|K\|_{L^1(0,b)}dz\\[0.125in]
&\hspace{-0.1in}\displaystyle  \leq Qb+Q\|k\|_{L^1(0,b)}\|K\|_{L^1(0,b)}.
\end{array}
\end{equation}
 We combine \eqref{thmA1}--\eqref{thmA3} and  \eqref{thmB} to find that $|G_2(s,t)|\leq Q+ D(t) : = \bar G(t) \in L^1(0, b)$ such that the condition (b) holds, which completes the proof.
\end{proof}

\section{Concluding remarks}
This work extends the idea of the CSC to propose WSCs to accommodate the inhomogeneity in practical applications. We characterize tight relations between CSC and its weighted versions, and then employ WSCs to derive equivalent but more feasible formulations of weighted integral equations and nonlocal differential equations to prove their well-posedness. The derived results and tools provide a general instrument to treat the weighted nonlocal problems.

There are several potential extensions of the current work. For instance, one could utilize the transformed models of the weighted nonlocal problems to analyze the high-order solution regularity.
 A more substantial extension is to employ the WSCs to analyze the weighted subdiffusion of variable exponent, which models the anomalously diffusive transport through heterogeneous porous media
\begin{equation}\begin{array}{c}\label{PDEModel}
\displaystyle \int_0^t w(s,t)k(t-s)\partial_s u(\boldsymbol x, s)ds  -\Delta u(\boldsymbol x,t) = f(\boldsymbol x,t), \quad (\boldsymbol x, t) \in  \Omega \times (0, T],\\[0.05in]
\displaystyle   u(\boldsymbol x, 0) = u_0, \quad \boldsymbol x \in \Omega; \quad u(\boldsymbol x, t) = 0, \quad (\boldsymbol x, t) \in \partial \Omega \times [0, T].
  \end{array}
\end{equation}
Here $\Omega \subset \mathbb{R}^d$ $ (d = 1, 2, 3)$ is a bounded domain, $\boldsymbol x : = (x_1,  \cdots, x_d)^\top$,    and $k(t):=\frac{t^{-\alpha(t)}}{\Gamma(1-\alpha(t))}$ for some $0<\alpha(t)<1$. When $\alpha(t)\equiv\alpha$ for some $0<\alpha<1$ and $w\equiv 1$, model (\ref{PDEModel}) degenerates to the standard subdiffusion \cite{Lucsub,Sak}. When $\alpha(t)\equiv\alpha$ for some $0<\alpha<1$ and $w(s,t)=w(t-s)$, the nonlocal term takes the convolution form and model (\ref{PDEModel}) has been analyzed in \cite{Yam}. The general case that $w=w(s,t)$ has not been considered in the literature.

 Following the same procedure as the derivations in \S \ref{wsc}, one could show that $k$ satisfies the WSC1 \eqref{mh2} with the associated kernel $K(t):=\frac{t^{\alpha(0)-1}}{\Gamma(\alpha(0))}$.
Then one could apply the transformation in \S \ref{IntDiff} to convert \eqref{PDEModel} as
\begin{equation*}\begin{array}{c}\label{thm:PDE:e1}
\displaystyle  \partial_t u(\boldsymbol x, t)+\frac{1}{g(t,0)}\int_0^t g_2(s,t-s)\partial_s u(\boldsymbol x, s)ds - \frac{1}{g(t,0)} \frac{d}{dt} \int_0^tK(t-s)\Delta u(\boldsymbol x, s)ds\\[0.05in]
\displaystyle \qquad \qquad  = \frac{1}{g(t,0)}\frac{d}{dt} \int_0^tK(t-s)f(\boldsymbol x, s)ds, \quad (\boldsymbol x, t) \in  \Omega \times (0, T],\\[0.125in]
\displaystyle   u(\boldsymbol x, 0) = u_0, \quad \boldsymbol x \in \Omega; \quad u(\boldsymbol x, t) = 0, \quad (\boldsymbol x, t) \in \partial \Omega \times [0, T].
\end{array}
\end{equation*}
This model has the same form as \cite[Equation 12]{Zhe}, except for a time-dependent factor $\frac{1}{g(t,0)}$ multiplying on the $\Delta u$. Then one may combine the methods of handling the low-order term (the term containing $g_2$) in \cite{Zhe} and the treatments for the time-dependent factor  in, e.g. \cite{JinLi}, to perform analysis.

Another challenging issue is to relax the LICM condition used in Theorem \ref{thmWSC2}. For instance, if one could show that it suffices to impose the LICM only near the $0$, then the LICM condition is significantly relaxed. This is probably a reasonable improvement since the behavior of the Sonine kernels near $0$ usually plays the critical role. We will investigate this interesting topic in the near future.

\bibliographystyle{amsplain}

\end{document}